\documentclass[12pt]{article}
\usepackage{amsmath, amsthm, amssymb, amsfonts, enumerate}
\usepackage[figuresright]{rotating}
\topmargin -.75in
\usepackage{natbib}

\textwidth 7.0in

\oddsidemargin -.08in

\textheight 9.3in

\begin{document}
\newcommand{\bea}{\begin{eqnarray}}
\newcommand{\ena}{\end{eqnarray}}
\newcommand{\beas}{\begin{eqnarray*}}
\newcommand{\enas}{\end{eqnarray*}}
\newcommand{\beq}{\begin{equation}}
\newcommand{\enq}{\end{equation}}
\def\qed{\hfill \mbox{\rule{0.5em}{0.5em}}}
\newcommand{\bbox}{\hfill $\Box$}
\newcommand{\From}{From}
\newcommand{\ignore}[1]{}
\newcommand{\ws}{{\widetilde \sigma}}
\newcommand{\btheta}{\mbox{\boldmath {$\theta$}}}
\newcommand{\transpose}{{\mbox{\scriptsize\sf T}}}
\newcommand{\esssup}{\mathop{\mathrm{ess\,sup}}}
\newcommand{\B}{B}
\newcommand{\D}{D}
\newcommand{\E}{E}
\newcommand{\V}{V}
\newcommand{\Km}{K_{\eta,m}}
\newcommand{\Eta}{\eta}
\newcommand{\de}{\varrho}
\newcommand{\tP}{\overline{P}}
\newcommand{\tD}{\overline{D}}
\newcommand{\tQ}{\overline{Q}}
\newcommand{\tL}{\overline{L}}
\newcommand{\tN}{\overline{N}}
\newcommand{\fmx}{\sup}
\newcommand{\abs}[1]{\left\vert#1\right\vert}
\numberwithin{equation}{section}
\theoremstyle{plain}
\newtheorem{thm}{Theorem}[section]
\newtheorem{lem}{Lemma}[section]
\newtheorem{cor}{Corollary}[section]
\newtheorem{asser}{Assertion}[section]


\newcommand{\To}{\rightarrow}
\newcommand{\wtilde}[1]{\widetilde{#1}}
\newcommand{\qmq}[1]{\quad\mbox{#1}\quad}
\newcommand{\changed}{\texttt{[CHANGED:]}}
\newcommand{\rtbct}{]}
\newcommand{\conjB}{[B]}
\def\beq{\begin{equation}}
\def\eeq{\end{equation}}

\newcommand{\figformat}{pdf}

\title{{\bf\Large The Fighter Problem: Optimal Allocation of a\\ Discrete Commodity}}

\author{Jay Bartroff$^1$ and Ester Samuel-Cahn$^2$\\\\
\small{$^1$ Department of Mathematics, University of Southern California, Los Angeles, California, USA}\\
\small{$^2$ Department of Statistics and Center for the Study of
Rationality,}\\ \small{The Hebrew University of Jerusalem,
Israel}\\} \footnotetext{AMS 2000 subject classifications. Primary
60G40\ignore{Stopping times; optimal stopping problems; gambling
theory}, Secondary 62L05} \footnotetext{Key words and phrases: Bomber problem, continuous ammunition, discrete ammunition, concavity} \maketitle

\date{}

\begin{abstract}
The Fighter problem with discrete ammunition is studied. An aircraft (fighter) equipped with $n$ anti-aircraft missiles is intercepted by enemy airplanes, the appearance of which follows a homogeneous Poisson process with known intensity.  If $j$ of the $n$ missiles are spent at an encounter they destroy an enemy plane with probability $a(j)$, where $a(0) = 0 $ and $\{ a(j)\}$ is a known, strictly increasing concave sequence, e.g., $a(j) = 1-q^j, \; \, 0 < q < 1$.  If the enemy is not destroyed, the enemy shoots the fighter down with known probability $1-u$, where  $0 \le u \le 1$.  The goal of the fighter is to shoot down as many enemy airplanes  as possible during a given time period~$[0, T]$.  Let $K (n, t)$ be the smallest optimal number of missiles to be used at a present encounter, when the fighter has flying time~$t$ remaining  and $n$ missiles remaining.  Three seemingly obvious properties of $K(n, t)$ have been conjectured: [A]  The closer to the destination, the more of the $n$ missiles one should use,
[B] the more missiles one has, the more one should use, and [C] the more missiles one has, the more  one should save for possible future encounters.  We show that [C] holds for all $0 \le u \le 1$, that [A] and [B] hold for the ``Invincible Fighter'' ($u=1$), and that [A] holds but [B] fails for the ``Frail Fighter'' ($u=0$), the latter through a surprising counterexample.
\end{abstract}

\section{Introduction}
We first describe two different models, the \textit{Bomber} and \textit{Fighter} models, with the following common probabilistic structure but with differing goals, stated below.  In both models, there are two agents acting: An aircraft which is equipped with $n$ anti-aircraft missiles, and an enemy which sends its intercepting airplanes according to a Poisson process which, without loss of generality, (by adjusting the time scale) we take to have intensity one.  When meeting an enemy, one must decide how many of the $n$ available missiles one should ``spend" on it.  The probability of destroying the enemy when attacking it with $j$ missiles is $a(j)$, where $a(0) = 0 $ and $\{ a(j)\}$ is a strictly increasing and strictly concave sequence, for example, $a(j) = 1 - q^j$ for some $0 < q < 1$.  This  $\{ a(j)\}$ corresponds to a situation where the missiles are i.i.d. Bernoulli with success probability $(1-q)$. If the aircraft fails to destroy the enemy, the enemy counterattacks it, and succeeds with a fixed, known probability
$1-u$, where $0 \le u \le 1$. Let $T > 0 $ and $[0, T]$ be a fixed time period.  The Bomber and Fighter differ only in their goals: They are, respectively,

\begin{description}
\item[Bomber:] To maximize the probability of reaching the target $T$ safely.
\item[Fighter:] To maximize the number of enemy planes shot down during the time period $[0, T]$.
\end{description}
Let $K(n, t)$ be an optimal number of missiles one should spend when meeting an enemy at a time $t$ from the destination, and $n$ missiles are at hand, referred to as \textit{state} $(n, t)$.  Clearly $K(n, t)$ differs for the two models, but it may seem intuitively obvious, or at least reasonable, that for both cases $K(n, t)$ possesses the following three properties:

\begin{description}
\item[[A\rtbct] $K(n, t)$ is non-increasing in $t$ for fixed $n$.
\item[[B\rtbct] $K(n,t)$ is non-decreasing in $n$ for fixed $t$. 
\item[[C\rtbct] $n-K(n,t)$ is non-decreasing in $n$ for fixed $t$. 
\end{description}

The Bomber problem with discrete ammunition was first considered by \citet{Klinger68}.  They assume
that [B] holds, and prove [A] under that assumption. \citet{Samuel70} proves [A] and [C] without any assumption about [B]. The question whether or not [B] holds for the Bomber with discrete ammunition is still not settled.  The problem is also considered in \citet{Simons90}.  For continuous ammunition, both of the problems have recently been considered by \citet{Bartroff10c,Bartroff10} and \citet{Bartroff11}.  In \citet{Bartroff10c}, [A] and [C] are shown to hold for the Bomber problem with continuous ammunition, for which [B] is still an open conjecture. For the Fighter with continuous ammunition, \citet{Bartroff10c} show that [A] and [C] hold for the \textit{Frail Fighter} where $u=0$ (implying that if, at an encounter, the fighter fails to shoot down the enemy, he himself is shot down with probability one), and [B] and [C] are shown to hold for the \textit{Invincible Fighter} where $u=1$, thus he can never be shot down.

The Fighter problem with an infinite time horizon and continuous ammunition~$x$ is considered in \citet{Shepp91}, although the terminology there is different.  In this case there is no dependence on $t$ and the optimal allocation can be denoted $\kappa(x)$.  Shepp et al.\ show that for the frail case, where $u = 0$, [B] fails.

In the present paper we consider the discrete ammunition case for the general fighter, $0 \le u \le 1$.  When not unique, we
single out the optimal spending policy $K(n,t)$ which, at each stage, uses the
smallest possible number of missiles; see (\ref{6}). For this $K(n,t)$, we show in Section~\ref{sec:C} that [C] holds for all $0 \le u \le 1$.  In Section~\ref{sec:ABIF} we prove both [A] and [B] for the Invincible Fighter, and we show that for this case $K(n,t)$ takes on all values $n, n-1, \ldots, 1$ as $t$ ranges from zero to infinity.  In Section~\ref{sec:AFF} we prove [A] for the Frail Fighter and show that [B] fails already for $n=5$.  Section~\ref{sec:model} introduces the necessary notation.

We apologize for using militaristic terminology but we did not want to change the terminology already existing in the literature.  We are convinced that the models are general enough to also have non-militaristic applications.

\section{The Model}\label{sec:model}

Let $0=a(0)<a(1) < a(2) < \ldots$ be a concave sequence of probabilities, where $a(j)$ is the  probability that the enemy will be shot down when $j$ missiles are shot at him.  An example is $a(j) = 1-q^j$, where $0< q<1$.  Define the sequences $N_n (j, t)$ and $N(n, t)$ inductively as follows.  $N_n(j, t)$ is the expected number of enemy airplanes shot down when \textit{confronting an enemy presently} and spending $j$ out of the $n$ missiles available on it, and later continuing optimally, and $N(n, t)$ is the total optimal expected number of enemy airplanes shot down when in state $(n,t)$, and confronting an enemy presently. Then
\beq\label{1}
N_1(1, t) = a(1) = N(1, t), \quad N(0, t) = 0,
\eeq
and define $N_n  (j, 0) = a(j)$ for $j = 1, \ldots, n$.
Let the present survival probability, when using $j$ missiles, be denoted $c(j)$.  Clearly
\beq\label{2} c(j) = a(j) + u(1-a(j))= a(j)(1-u)+ u.
\eeq
Then
\beq\label{3}
N_n (j, t) = a(j) + c(j) N^* (n-j, t)
\eeq
where, for $r=1, 2,\dots $,
\beq\label{4}
N^* (r, t) = \int^t_0 N(r, t-v) e^{-v} d v = e^{-t}\int^t_0 N(r, x) e^x dx
= EN (r, t-X),\eeq
where $X$ has an exponential distribution and where we have set $N(r, t) = 0$ for $t < 0$.
Then inductively we have
\beq\label{5}  N(n, t) = \max\limits_{j=1, \ldots, n} N_n (j, t).\eeq
An optimal  spending strategy is therefore
\beq\label{6}
K(n, t) = \min\{ j: N_n (j, t) = \max\limits_{i = 1, \ldots, n} N_n (i, t)\}\eeq
We have written ``an" and not ``the"  since the maximum in (\ref{6}) may possibly be attained for more than one value of $j$, in which case we always choose the smallest, avoiding any issues of uniqueness and making $K(n,t)$ well defined. In what follows, $K(n,t)$ always refers to (\ref{6}).
 
It follows from \eqref{3} that for $j=1,\ldots, n$,
\beq\label{7}
N_{n+1} (j+1, t) = \frac{c(j+1)}{c(j)} [N_n (j, t) - a(j)] + a(j+1),\eeq
which can also be written as
\beq\label{8}
N_n (j, t) = a(j) +\frac{c(j)}{c(j+1)} [N_{n+1} (j+1, t) - a(j+1)].\eeq

\section{Proof of [C]}\label{sec:C}

\begin{lem}\label{lem:1} For $j=1, 2, \ldots $ let
\[ v(j) = a(j) - \frac{c(j)}{c(j+1)} a(j+1).\]
Then
\beq\label{9} v(j) \le v(j+1) \le 0,\eeq
and equality holds (in either or both places) if and
only if $u=0$.
\end{lem}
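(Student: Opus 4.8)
The plan is to reduce $v(j)$ to a single transparent fraction and then read off both assertions from elementary sign considerations. First I would combine over the common denominator $c(j+1)$ and use the definition $c(j)=a(j)(1-u)+u$ from \eqref{2} to simplify the numerator:
\[
v(j) = \frac{a(j)c(j+1)-c(j)a(j+1)}{c(j+1)} = \frac{u\bigl(a(j)-a(j+1)\bigr)}{c(j+1)} = -\,\frac{u\,d(j)}{c(j+1)},
\]
where I write $d(j) = a(j+1)-a(j)>0$ for the increments (positive, by strict monotonicity of $\{a(j)\}$). The middle equality is the only genuine computation, and it works because the cross terms $a(j)a(j+1)(1-u)$ cancel. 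Note also that $c(j+1)\ge a(j+1)>0$, so the fraction is well defined and I may divide by $c$ freely throughout.

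With this form the upper bound $v(j)\le 0$ (which is the inequality $v(j+1)\le 0$ of \eqref{9} after an index shift) is immediate: since $u\ge 0$, $d(j)>0$ and $c(j+1)>0$, the right-hand side is $\le 0$, and it vanishes precisely when $u=0$. This already settles one of the two ``places'' in the equality claim.

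For the monotonicity I would compute the difference directly from the closed form,
\[
v(j+1)-v(j) = u\left[\frac{d(j)}{c(j+1)}-\frac{d(j+1)}{c(j+2)}\right].
\]
If $u=0$ both terms vanish and $v(j)=v(j+1)=0$. If $u>0$ it suffices to show $d(j)\,c(j+2) > d(j+1)\,c(j+1)$. Here I invoke the two structural facts: strict concavity of $\{a(j)\}$ gives $d(j)>d(j+1)>0$, while $c$ is non-decreasing (because $1-u\ge 0$ and $a$ is increasing), so $c(j+2)\ge c(j+1)>0$. The chain $d(j)c(j+2)\ge d(j)c(j+1) > d(j+1)c(j+1)$ then yields the strict inequality, hence $v(j)<v(j+1)$.

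Combining the two parts gives $v(j)\le v(j+1)\le 0$, with both equalities holding simultaneously exactly when $u=0$ and both strict otherwise. I expect no serious obstacle: the crux is spotting the algebraic collapse of the numerator to $u(a(j)-a(j+1))$, after which the sign and monotonicity of $v$ follow from concavity of $\{a(j)\}$ and monotonicity of $c$. The only point requiring care is the equality characterization of the first inequality, which is precisely where \emph{strict} (rather than weak) concavity is essential.
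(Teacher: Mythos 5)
Your proof is correct and rests on the same key computation as the paper's: the cancellation $a(j)c(j+1)-c(j)a(j+1)=u\bigl(a(j)-a(j+1)\bigr)$, followed by strict concavity of $\{a(j)\}$ and monotonicity of $c$. The paper organizes this as the ratio inequality \eqref{10} rather than your explicit closed form $v(j)=-u\,d(j)/c(j+1)$, but the argument is essentially identical, including the equality characterization at $u=0$.
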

\begin{proof}
When $u = 0$ the equality in \eqref{9} is immediate, as $c(j) = a(j)$ in this case.  Thus assume $u > 0$.
 The right hand inequality in (\ref{9}) is immediate.
 Simplifying, it is easily seen that the other inequality in \eqref{9} can be written as
\beq\label{10}
\frac{a(j+1) c(j) - a(j) c(j+1)}{a(j+2) c(j+1) - a(j+1) c(j+2)} \ge \frac{c(j+1)}{c(j+2)}.
\eeq
Using \eqref{2} we see that the left hand side of (\ref{10}) simplifies to $[a(j+1) - a(j)]/ [a(j+2) -a(j+1)]$,
and thus does not depend on $u$.  Furthermore, by concavity, the value of the left hand side is always greater than 1, while the right hand side of \eqref{10} depends on $u$ and is always less than, or equal to, one.  Thus \eqref{10} holds.
\end{proof}

We can rewrite \eqref{8} as
\beq\label{11}  N_n (j, t) = \frac{c(j)}{c(j+1)} N_{n+1} (j+1, t) +v(j)\eeq where $v(j) \le 0$.
\begin{thm}\label{thm:C} [C] holds for the general Fighter problem, i.e., for all $0\le u \le 1$.
\end{thm}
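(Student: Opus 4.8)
The plan is to restate [C] in the equivalent form $K(n+1,t) \le K(n,t)+1$ for every $n$ and every fixed $t$: saying $n-K(n,t)$ is non-decreasing in $n$ is precisely saying that $K$ increases by at most one when $n$ increases by one. Writing $k=K(n,t)$, it then suffices to show that the smallest maximizer of $N_{n+1}(\cdot,t)$ does not exceed $k+1$, and for that it is enough to establish the domination
\[
N_{n+1}(k+1,t)\ \ge\ N_{n+1}(j+1,t)\qquad\text{for all } j \text{ with } k\le j\le n,
\]
since this forces the maximum of $N_{n+1}(i,t)$ over all $i$ to be attained at some index $i\le k+1$, whence the smallest maximizer, namely $K(n+1,t)$, is at most $k+1$.

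Next I would pass from $N_{n+1}$ back to $N_n$ using \eqref{11}. Solving that identity for $N_{n+1}(j+1,t)$ gives $N_{n+1}(j+1,t)=\frac{c(j+1)}{c(j)}\bigl[N_n(j,t)-v(j)\bigr]$, which exhibits each quantity to be compared as a product of the positive factor $g(j):=c(j+1)/c(j)$ and the factor $N_n(j,t)-v(j)$. I would then record three monotonicity facts valid for $j\ge k$: (i) $N_n(j,t)\le N_n(k,t)$, because $k=K(n,t)$ is a maximizer of $N_n(\cdot,t)$ by \eqref{6}; (ii) $0\le -v(j)\le -v(k)$, from $v(k)\le v(j)\le 0$ in Lemma~\ref{lem:1}; and (iii) $N_n(j,t)-v(j)\ge 0$, since $N_n(j,t)\ge a(j)\ge 0$ by \eqref{3} and $v(j)\le 0$. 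Combining (i) and (ii) gives $0\le N_n(j,t)-v(j)\le N_n(k,t)-v(k)$.

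The remaining ingredient, and the one genuinely new input, is the monotonicity of the factor $g(j)=c(j+1)/c(j)$, namely that it is non-increasing in $j$. This is exactly the log-concavity of $c$, $c(j+1)^2\ge c(j)c(j+2)$, which I would deduce from concavity and positivity: concavity gives $2c(j+1)\ge c(j)+c(j+2)$, and AM–GM gives $\tfrac12(c(j)+c(j+2))\ge\sqrt{c(j)c(j+2)}$, so $c(j+1)\ge\sqrt{c(j)c(j+2)}$ (positivity holds for the indices $j\ge k\ge 1$ that occur, since $c(j)\ge u$ when $u>0$ and $c(j)=a(j)>0$ when $u=0$). With $g(k)\ge g(j)>0$ for $j\ge k$ in hand, the domination follows from one sign-tracking step: setting $A:=N_n(k,t)-v(k)\ge B:=N_n(j,t)-v(j)\ge 0$, we get $g(k)A\ge g(k)B\ge g(j)B$, that is $N_{n+1}(k+1,t)\ge N_{n+1}(j+1,t)$. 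This gives the domination for every $j\ge k$, hence $K(n+1,t)\le k+1=K(n,t)+1$, which is [C]. The part requiring the most care is this last combination, where two competing monotonicities (the decreasing factor $g$ against the decreasing factor $N_n-v$) are multiplied together; the argument is valid only because of the nonnegativity in (iii), which keeps the product inequality from reversing.
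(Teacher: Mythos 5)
Your proof is correct and is essentially the paper's argument run in the mirror direction: the paper fixes $k=K(n+1,t)$ and uses \eqref{11}, Lemma~\ref{lem:1}, and the log-concavity of $c$ (left implicit in the phrase ``by Lemma~\ref{lem:1} and concavity of the $a(j)$ sequence'') to show the maximum of $N_n(\cdot,t)$ cannot be attained below $k-1$, whereas you fix $k=K(n,t)$ and use exactly the same ingredients to show the maximum of $N_{n+1}(\cdot,t)$ is attained no later than $k+1$. Since the key identity, the sign and monotonicity of $v$, the nonnegativity bookkeeping, and the monotone ratio $c(j+1)/c(j)$ all coincide, this is the same approach, your only addition being the explicit AM--GM derivation of the log-concavity of $c$.
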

\begin{proof}
We must show that $n+1-K(n+1, t) \ge n-K(n, t)$, which can also be written as
\beq\label{12} K (n+1, t) \le K (n, t) +1.\eeq
Clearly \eqref{12} holds when $K (n+1, t) = 1$.  Thus assume  $K (n+1, t) = k$ for some $2 \le k \le n +1$ and some $t$.
By \eqref{11} it follows that
\begin{align} \label{13}
N_n (j, t) &= \frac{c(j)}{c(j+1)} N_{n+1} (j+1, t) + v(j)\nonumber\\
&\le \frac{c(j)}{c(j+1)} N_{n+1} (k, t) + v(j)\\
&= \frac{c(j)}{c(j+1)}\frac{c(k)}{c(k - 1)}\left[N_n(k-1, t) - v(k-1)\right] +v(j)\nonumber
\end{align}
where the inequality in \eqref{13} follows because $k$ is optimal for state $(n+1, t)$ and the last equality uses \eqref{11}.
Note that $k > 1$ was used to avoid $k - 1=0$. The maximum over $j=1,\ldots, n$ on the left hand side of \eqref{13} is, by definition, attained for $j = K(n,t)$.  Note however that for all $j< k-1$ the value of the right hand side of \eqref{13} is, by Lemma~\ref{lem:1} and concavity of the $a(j)$ sequence, less than $N_n(k-1,t)$.  For $j = k-1$ equality holds throughout in \eqref{13}.  Thus the maximum over $j=1, \ldots, n$ of the left hand side cannot be attained for any $j < k-1$, i.e., $K(n,t) \ge k-1$, which is \eqref{12}.
\end{proof}

A similar approach can be used to prove [C] for continuous ammunition for all $0\le u\le 1$. In \citet{Bartroff10c}, [C] was 
only shown to hold for $u = 0$ and $u = 1$.

\section{Proof of [A] and [B] for the Invincible Fighter}\label{sec:ABIF}

We need the following lemma, valid for any $0 \le u \le 1$.
\begin{lem}\label{lem:3} If, for $n\ge 2$,
\beq\label{14}
N_n (j, t) - N_n (j+1, t)\quad\mbox{is non-decreasing in $t$ for $j = 1, \dots, n-1$,} \eeq
 then [A] holds for $n$.
\end{lem}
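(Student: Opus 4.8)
The plan is to show directly that $K(n,\cdot)$ is non-increasing, i.e.\ that $t_1<t_2$ forces $K(n,t_1)\ge K(n,t_2)$, and to extract this from hypothesis \eqref{14} by a telescoping comparative-statics argument. The mechanism is that \eqref{14} says each one-step increment $N_n(j,t)-N_n(j+1,t)$ is non-decreasing in $t$; intuitively the marginal value of firing an extra missile, $N_n(j+1,t)-N_n(j,t)$, then decreases as the remaining time grows, so one should fire fewer missiles when more time is left, which is exactly [A].

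Concretely, I would fix $t_1<t_2$, set $k:=K(n,t_2)$, and aim to prove $K(n,t_1)\ge k$. Summing \eqref{14} over $l=i,\dots,k-1$ shows that for every $i<k$ the difference $N_n(i,t)-N_n(k,t)=\sum_{l=i}^{k-1}\bigl[N_n(l,t)-N_n(l+1,t)\bigr]$ is non-decreasing in $t$, being a finite sum of functions that are each non-decreasing in $t$. Because $k$ is by \eqref{6} the \emph{smallest} maximizer at $t_2$, every index below $k$ is strictly beaten there: $N_n(i,t_2)-N_n(k,t_2)<0$ for all $i<k$. Monotonicity in $t$ then propagates this backwards: at $t_1<t_2$ the same difference is no larger, hence still negative, so $N_n(i,t_1)<N_n(k,t_1)$ for all $i<k$. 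Thus no index below $k$ can maximize $N_n(\cdot,t_1)$, which by \eqref{6} gives $K(n,t_1)\ge k=K(n,t_2)$, as required.

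The step I expect to require the most care is the bookkeeping around the ``smallest maximizer'' convention in \eqref{6}: the argument needs the strict inequalities $N_n(i,t_2)<N_n(k,t_2)$ for $i<k$, and these hold precisely because $k$ is the smallest, not merely some, maximizer---a tie with a smaller index would break the conclusion. The rest is routine once one notes that a non-decreasing function that is negative at $t_2$ stays negative at every $t_1<t_2$, and that the whole scheme needs no control over indices $i>k$, only over those below $k$. The case $n=1$ is vacuous since $K(1,t)\equiv 1$, so the hypothesis $n\ge 2$ suffices.
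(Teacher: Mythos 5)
Your proof is correct and is essentially the paper's own argument: both telescope $N_n(i,t)-N_n(k,t)$ into the one-step differences of \eqref{14} and use the smallest-maximizer convention in \eqref{6} to get the needed strict inequality at the later time. The only difference is presentational—the paper runs the same comparison as a proof by contradiction, while you argue directly that $K(n,t_1)\ge K(n,t_2)$.
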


\begin{proof} Suppose the lemma is false. Then there exist values $t'> t>0$ and $n\ge j > r \ge 1$ such that $K(n, t') = j > K (n, t) = r$.  But then
\[ N(n, t') = N_n (j, t') > N_n (r, t') \; \; \mathrm{and} \; \; N(n, t) = N_n (r, t) \ge N_n (j, t).\]
Thus
\begin{align*}0 &\le N_n (r, t) - N_n (j, t) = \sum^{j-1}_{i = r} [ N_n (i, t) - N_n (i+1, t)] \\
&\le \sum^{j-1}_{i = r} [ N_n (i, t') - N_n (i+1, t') ] = N_n (r, t') - N_n (j, t') < 0,\end{align*}
which clearly is a contradiction.
\end{proof}

Note that for the Invincible Fighter, (\ref{3}) simplifies to
\beq\label{15} N_n (j, t) = a(j) + N^* (n-j, t).\eeq
\begin{lem}\label{lem:4} For the Invincible Fighter and any $n\ge 2$, a sufficient condition for
$ N_n (j, t) - N_n (j+1, t)$ to be strictly increasing in $t$, for $j=1, \dots, n-1$, is that
\beq\label{16} N(s, t) - N(s-1, t)\quad\mbox{is non-decreasing in $t$ for $s=1,\dots, n-1$.}\eeq
\end{lem}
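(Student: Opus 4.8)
The plan is to reduce the claim to a statement about the single averaged quantity $N^{*}$ and then differentiate in $t$. First I would use \eqref{15} to write, for $j=1,\dots,n-1$,
\[
N_n(j,t)-N_n(j+1,t)=\bigl[a(j)-a(j+1)\bigr]+\bigl[N^{*}(n-j,t)-N^{*}(n-j-1,t)\bigr].
\]
Since $a(j)-a(j+1)$ is constant in $t$, the difference $N_n(j,t)-N_n(j+1,t)$ is strictly increasing in $t$ if and only if $D^{*}(s,t):=N^{*}(s,t)-N^{*}(s-1,t)$ is strictly increasing in $t$ at $s=n-j$. As $j$ runs over $1,\dots,n-1$ the index $s=n-j$ runs over $1,\dots,n-1$, exactly the range covered by hypothesis \eqref{16}. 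So it suffices to show that \eqref{16} forces each $D^{*}(s,\cdot)$, $s=1,\dots,n-1$, to be strictly increasing.

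The key computational step is to differentiate the representation in \eqref{4}. Writing $N^{*}(r,t)=e^{-t}\int_0^t N(r,x)e^{x}\,dx$ and differentiating gives the clean identity $\partial_t N^{*}(r,t)=N(r,t)-N^{*}(r,t)$. Setting $D(s,t):=N(s,t)-N(s-1,t)$ and subtracting the identity for $r=s-1$ from that for $r=s$ yields $\partial_t D^{*}(s,t)=D(s,t)-D^{*}(s,t)$. Thus proving strict monotonicity of $D^{*}(s,\cdot)$ amounts to proving the strict inequality $D^{*}(s,t)<D(s,t)$ for every $t>0$.

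To obtain this inequality I would exploit the hypothesis \eqref{16} that $D(s,\cdot)$ is non-decreasing. Since $N^{*}$ depends linearly on $N$ in \eqref{4}, we have $D^{*}(s,t)=e^{-t}\int_0^t D(s,x)e^{x}\,dx$, a weighted average of the values $D(s,x)$ over $x\in[0,t]$ against the positive weight $e^{x}$. Because $D(s,x)\le D(s,t)$ for all $x\le t$, this average is bounded by $D(s,t)\,e^{-t}\int_0^t e^{x}\,dx=D(s,t)\bigl(1-e^{-t}\bigr)$, whence $\partial_t D^{*}(s,t)=D(s,t)-D^{*}(s,t)\ge D(s,t)e^{-t}$.

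The remaining, and genuinely essential, point is strictness: the averaging bound only gives $\partial_t D^{*}(s,t)\ge D(s,t)e^{-t}$, so I must still rule out $D(s,t)=0$. This is where the boundary data enters. At $t=0$ the definitions give $N(s,0)=a(s)$, hence $D(s,0)=a(s)-a(s-1)>0$ by strict monotonicity of $\{a(j)\}$; combined with the hypothesis that $D(s,\cdot)$ is non-decreasing, this yields $D(s,t)\ge a(s)-a(s-1)>0$ for all $t\ge0$. Therefore $\partial_t D^{*}(s,t)\ge\bigl(a(s)-a(s-1)\bigr)e^{-t}>0$, so $D^{*}(s,\cdot)$ is strictly increasing and the lemma follows. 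I expect the main obstacle to be recognizing that mere non-decrease in \eqref{16} suffices to produce \emph{strict} increase of $D^{*}$; the resolution is the combination of the weighted-average bound with the strictly positive boundary value $a(s)-a(s-1)$, rather than any delicate estimate.
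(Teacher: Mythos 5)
Your proof is correct and follows essentially the same route as the paper: decompose the difference via \eqref{15}, differentiate $N^*$ using \eqref{4} to get $\partial_t[N^*(s,t)-N^*(s-1,t)]=[N(s,t)-N(s-1,t)]-[N^*(s,t)-N^*(s-1,t)]$, and use the hypothesis \eqref{16} to bound the averaged difference by $[N(s,t)-N(s-1,t)](1-e^{-t})$. The only (harmless) difference is that you derive the needed strict positivity $N(s,t)-N(s-1,t)>0$ from the boundary value $a(s)-a(s-1)>0$ together with \eqref{16}, whereas the paper invokes that positivity directly.
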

\begin{proof}  By \eqref{15} and \eqref{4},
\begin{multline}\label{17} 
\frac{d}{dt} [N_n (j, t) - N_n (j+1, t)] = \frac{d}{dt} [N^* (n-j, t) - N^* (n-j-1, t)]\\
=[ N(n-j, t) - N(n-j-1, t)] - [N^* (n-j, t) - N^* (n-j-1, t)].
\end{multline}
Now if \eqref{16} holds,
\begin{multline*}
N^* (n-j, t) - N^* (n-j-1,t) = e^{-t}\int\limits^t_0 [N(n-j, v) - N(n-j-1,v)] e^v dv\\
\le e^{-t} [N(n-j, t) - N(n-j-1, t)]\int^t_0 e^v dv= [N(n-j,t) - N(n-j-1, t)] (1-e^{-t})\\
 < N(n-j, t) - N(n-j-1, t)
\end{multline*}
where the strict inequality follows as $N(s, t) - N(s-1, t) > 0 $ for all $s \ge 1$ and $t\ge 0$.  Thus the value in \eqref{17} is strictly positive.\end{proof}

It is important to note that \eqref{16} needs to hold only for $s$ up to $n-1$, as $j \ge 1$.  [A] will then hold for $n$, by Lemma~\ref{lem:3}.  Also note that when \eqref{16} holds, $N^* (s, t) - N^* (s-1, t)$ is strictly increasing in $t$.

\begin{thm}\label{thm:AIF}
[A] holds for the Invincible Fighter.
\end{thm}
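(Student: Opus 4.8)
The plan is to prove, by induction on $n$, the statement $H(n)$: \emph{$N(s,t)-N(s-1,t)$ is non-decreasing in $t$ for every $s\le n$}. This is exactly condition \eqref{16}, so once $H(n-1)$ is in hand, Lemma~\ref{lem:4} makes $N_n(j,t)-N_n(j+1,t)$ strictly increasing in $t$ for $j=1,\dots,n-1$, and then Lemma~\ref{lem:3} yields [A] for $n$. Thus [A] for all $n$ comes out as a byproduct of establishing $H(n)$ for all $n$, and the whole theorem reduces to the step $H(n-1)\Rightarrow H(n)$, whose only genuinely new content is the monotonicity of $N(n,t)-N(n-1,t)$. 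The base case $n=1$ is trivial, since $N(1,t)-N(0,t)=a(1)$ is constant.

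For the inductive step, first note that $H(n-1)$ already delivers [A] (hence the non-increasing step-function structure of $K(m,\cdot)$) for all $m\le n$, via Lemmas~\ref{lem:3} and~\ref{lem:4} as above. Writing $k=K(n,t)$ and $k'=K(n-1,t)$, on any $t$-interval on which both are constant, \eqref{15} gives $N(n,t)=a(k)+N^*(n-k,t)$ and $N(n-1,t)=a(k')+N^*(n-1-k',t)$. Differentiating and using the identity $\tfrac{d}{dt}N^*(r,t)=N(r,t)-N^*(r,t)$ that follows from \eqref{4}, I obtain
\[
\frac{d}{dt}\left[N(n,t)-N(n-1,t)\right]=g(r,t)-g(r',t),
\]
where $g(r,t):=N(r,t)-N^*(r,t)$, $r=n-k$, and $r'=(n-1)-k'$. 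Since $K(n,\cdot)$ and $K(n-1,\cdot)$ are step functions and $N(n,\cdot)-N(n-1,\cdot)$ is continuous, it suffices to show this derivative is $\ge 0$ on each interval of constancy.

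The heart of the argument is to prove $g(r,t)\ge g(r',t)$, which combines two ingredients. First, [C] (Theorem~\ref{thm:C}) gives $n-K(n,t)\ge(n-1)-K(n-1,t)$, i.e.\ $r\ge r'$. Second, $g(\cdot,t)$ is increasing in its integer argument up to $n-1$: writing $D(s,t)=N(s,t)-N(s-1,t)$, one has $g(s,t)-g(s-1,t)=D(s,t)-[N^*(s,t)-N^*(s-1,t)]$, and the same computation as in the proof of Lemma~\ref{lem:4}, using $H(n-1)$ to bound $N^*(s,t)-N^*(s-1,t)\le D(s,t)(1-e^{-t})$, shows $g(s,t)-g(s-1,t)\ge D(s,t)e^{-t}>0$ for $s\le n-1$. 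Since $k\ge 1$ forces $r\le n-1$, summing these positive increments from $r'$ to $r$ yields $g(r,t)\ge g(r',t)$, so the displayed derivative is non-negative; continuity then upgrades this to monotonicity on all of $[0,\infty)$, completing $H(n)$.

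I expect the main obstacle to be differentiating the difference of two maxima $N(n,t)-N(n-1,t)$, since the two states are optimized at different, $t$-dependent allocations $k$ and $k'$; the resolution is the envelope-type derivative formula valid on intervals of constant policy, together with the crucial pairing of [C] (which orders the saved amounts $r\ge r'$) against the inductive monotonicity of $D$ (which orders $g$ accordingly). A secondary point needing care is the boundary value $r'=0$, where $g(0,t)=0$ and the increment argument still applies starting from $s=1$.
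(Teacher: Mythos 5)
Your proof is correct and takes essentially the same route as the paper: an induction on \eqref{16}, using Lemmas~\ref{lem:3} and~\ref{lem:4} to obtain [A] (and hence the interval-of-constancy structure of $K(m,\cdot)$) at lower indices, and Theorem~\ref{thm:C} to order the amounts saved. Your envelope derivative $g(r,t)-g(r',t)$ with the increment bound $g(s,t)-g(s-1,t)\ge D(s,t)e^{-t}$ is exactly what one obtains by differentiating the paper's telescoping sum of the terms $N_m(i,t)-N_m(i+1,t)$ in \eqref{18}, i.e., the Lemma~\ref{lem:4} computation repackaged, so the two arguments coincide in substance.
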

\begin{proof} For $n=2$ and $s=1$ in (\ref{16}) we have $N(1, t) - N(0,t ) = a(1)$, which is non-decreasing in $t$, thus [A] holds for $n=2$.
For $s=2$, $N(2, t) - N(1, t) = \max \{ a(2) - a(1), a(1)(1-e^{-t})\}$ and as both of the expressions in the curly brackets are non-decreasing in $t$, it follows that [A] holds for $n=3$. (In our proof by induction we shall need to go back two steps, thus the need to show for $n=2$ and $n=3 $ directly.  A similar fact was overlooked in the proof in \citet{Samuel70}).

Now suppose that \eqref{16} holds for all $n\le m$, where $m \ge 3$.  We shall show that it holds for $n=m+1$, i.e., we need to 
show that $N(m, t) - N(m-1, t)$ is non-decreasing in $t$.  By the induction hypothesis we know that [A] holds for $n = m$ and for $n=m-1$ and that $K(m, t)$ and $K(m-1, t)$ are constant on left closed, right open intervals.  Suppose that in some interval of $t$-values $K(m, t) = j$ and $K(m-1, t)= r$.  Then $1\le j \le r + 1$ by [C], and hence in that interval,
\begin{multline}
\label{18}
N (m,t) - N(m-1,t) = N_m (j, t) - N_{m-1} (r, t)\\
=N_m (j, t) - N_m (r+1, t) + a(r+1) - a(r),
\end{multline}
where the equality follows from \eqref{15}.  For $j=r+1$ the value in \eqref{18} is constant, hence non-decreasing.  For $1\le j \le r $ write
\[
N_m (j, t) - N_m (r+1, t) = \sum^r_{i = j}  [N_m (i, t) - N_m (i+1, t)],\]
and note that all values of the square brackets in the sum are non-decreasing. This is a direct consequence of the induction hypothesis (which assumed that \eqref{16} holds for $n=m$) and Lemma~\ref{lem:4}. It follows that \eqref{16} holds for $n=m+1$ and thus, by Lemma~\ref{lem:3}, that [A] holds for $n=m+1$.
\end{proof}

We now show [B] for the Invincible Fighter.  The proof of the following lemma is similar to that of Lemma~\ref{lem:3}, and hence omitted.

\begin{lem}\label{lem:6} Suppose $N_n (j+1, t) - N_n (j, t)$ is non-decreasing in $n$ for $j = 1,\dots, n-1$ and $t$ fixed.  Then $K (n, t)$ is non-decreasing in $n$, i.e., [B] holds.
\end{lem}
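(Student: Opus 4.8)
The plan is to mirror the contradiction argument used for Lemma~\ref{lem:3}, interchanging the roles of the continuous variable $t$ and the discrete variable $n$, and flipping the sign of the hypothesized differences accordingly (here the increasing-index difference $N_n(j+1,t)-N_n(j,t)$ plays the role that $N_n(j,t)-N_n(j+1,t)$ played before). Suppose [B] fails. Then for some fixed $t$ there exist indices $n_1 < n_2$ with $K(n_1,t) = j$ and $K(n_2,t) = r$ where $j > r$. Since $1 \le r < j \le n_1 < n_2$, both allocations $r$ and $j$ are admissible in each of the states $(n_1,t)$ and $(n_2,t)$, which is what lets the telescoped index pairs below stay within the valid range.

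First I would extract two one-sided inequalities from the definition~(\ref{6}) of $K$. Because $j = K(n_1,t)$ is the \emph{smallest} maximizer at state $(n_1,t)$ and $r < j$, the allocation $r$ is strictly suboptimal there, giving the strict inequality $N_{n_1}(j,t) - N_{n_1}(r,t) > 0$. Because $r = K(n_2,t)$ is a maximizer at state $(n_2,t)$ and $j$ is admissible there, we get the weak inequality $N_{n_2}(j,t) - N_{n_2}(r,t) \le 0$.

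Next I would telescope each of these differences as
\[ N_n(j,t) - N_n(r,t) = \sum_{i=r}^{j-1} \left[ N_n(i+1,t) - N_n(i,t) \right], \]
and apply the hypothesis term by term: each summand $N_n(i+1,t) - N_n(i,t)$ is non-decreasing in $n$, hence so is the sum, yielding $N_{n_2}(j,t) - N_{n_2}(r,t) \ge N_{n_1}(j,t) - N_{n_1}(r,t)$. Chaining the three relations gives $0 \ge N_{n_2}(j,t) - N_{n_2}(r,t) \ge N_{n_1}(j,t) - N_{n_1}(r,t) > 0$, the desired contradiction, so $K(n,t)$ must be non-decreasing in $n$.

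The only real subtlety — and where the ``smallest maximizer'' convention in~(\ref{6}) does the essential work — is securing the \emph{strict} inequality at state $(n_1,t)$; without it the chain would collapse to a string of equalities rather than a contradiction (though even then $r$ would be a maximizer at $n_1$ smaller than $j$, still contradicting the choice of $K(n_1,t)$). Beyond that the argument is routine bookkeeping: one only needs to confirm the admissibility of $r$ and $j$ in both states and that every index pair $(i,i+1)$ appearing in the telescoping sum satisfies $i+1 \le j \le n_1 \le n$ for all relevant $n$, so that the monotonicity hypothesis applies to each term across the whole range from $n_1$ to $n_2$.
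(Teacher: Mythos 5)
Your proof is correct and is exactly the argument the paper intends: the paper omits the proof of Lemma~\ref{lem:6}, stating only that it is ``similar to that of Lemma~\ref{lem:3},'' and your contradiction-plus-telescoping argument with the roles of $t$ and $n$ interchanged is precisely that adaptation, including the correct use of the smallest-maximizer convention in~(\ref{6}) to obtain the strict inequality at $(n_1,t)$.
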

\begin{lem}\label{lem:7} Suppose that $N(s, t)$ is (strictly) concave in $s$ for $s = 0, 1, \ldots, k$ and all fixed $t$.
Then so is $N^* (s, t)$.
\end{lem}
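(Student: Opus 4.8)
The plan is to exploit that $N^*$ is produced from $N$ by a single \emph{fixed} averaging in the time variable, $N^*(s,t)=EN(s,t-X)$ with $X$ exponential (equivalently the kernel representation in \eqref{4}), so that $N^*$ acts \emph{linearly} on functions of the discrete argument $s$; and concavity in $s$ is preserved under any such nonnegative linear averaging. Concretely, concavity of $N(\cdot,t)$ on $\{0,1,\dots,k\}$ is equivalent to the second difference $\delta(s,t):=N(s+1,t)-2N(s,t)+N(s-1,t)$ being $\le 0$ for each interior $s$ (and $<0$ in the strict case), and likewise for $N^*$ with its second difference $\delta^*(s,t)$. So it suffices to control the sign of $\delta^*$.

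First I would push the second difference through the integral. Since the integral in \eqref{4} is linear in its integrand and the admissible range of $s$ is unchanged, one gets directly
\[
\delta^*(s,t)=\int_0^t \delta(s,t-v)\, e^{-v}\, dv .
\]
Next I would check the sign of the integrand pointwise. For every $v\in[0,t]$ the time argument $t-v$ is nonnegative, so the hypothesis applies at time $t-v$ and $\delta(s,t-v)\le 0$ (respectively $<0$). The same conclusion is transparent in the expectation form $EN(s,t-X)$: on the event $\{X\le t\}$ the argument $t-X$ is nonnegative and the hypothesis gives $\delta\le 0$, while on $\{X>t\}$ all three terms vanish by the convention $N(r,\cdot)=0$ on the negatives, so the integrand is identically $0$ there. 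As $e^{-v}>0$, the integral of a nonpositive integrand is nonpositive, which is concavity of $N^*(\cdot,t)$.

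For the strict case I would note that when $t>0$ the integrand $\delta(s,t-v)$ is \emph{strictly} negative for all $v\in(0,t)$ (equivalently the event $\{X<t\}$ has positive probability), and $e^{-v}>0$ there, so $\delta^*(s,t)<0$ and $N^*(\cdot,t)$ is strictly concave. The only mild caveat is the boundary $t=0$, where $N^*(s,0)=0$ for every $s$ and strictness necessarily degenerates; this case is irrelevant to the applications, where $t>0$.

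I do not anticipate a serious obstacle: the whole content is that concavity survives averaging against a nonnegative kernel, so the argument is a one-line interchange of the second-difference operator with the integral followed by a sign check. The single thing to watch is the convention $N(r,t)=0$ for $t<0$, which is precisely what guarantees that the integrand is well controlled (indeed zero) once $v$ exceeds $t$, so the hypothesis on $N$ is never invoked at a negative time.
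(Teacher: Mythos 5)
Your argument is correct and is exactly the content behind the paper's one-line proof (``Immediate from \eqref{4}''): the second difference in $s$ passes through the linear averaging in \eqref{4}, and the nonnegative kernel preserves its sign. Your caveat about degeneracy at $t=0$ in the strict case is a fair observation but immaterial to how the lemma is used.
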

\begin{proof}  Immediate from \eqref{4}.
\end{proof}

The next two lemmas are slight modifications of Lemmas~3.1 and 5.1 of \citet{Bartroff10c}, the differences being  (i) 
the domains of
the functions here are non-negative integers, and (ii) when the definition of
$g(s)$ is for $s$ up to $k$, the resulting definition of $h(s)$ is for $s$
up to $k+1$. The proofs are omitted.


\begin{lem}\label{lem:8}
Let $f(j)$, $j=1,2,\dots$, be a non-negative log-concave sequence and let $g(s)$, $s=0,1,\dots, k$, be a non-negative log-concave function.  For any given $n$, $n = 1, 2, \dots, k+1$, define
\[
h(n) = \max\limits_{j=1,\ldots, n} \{ f(j) g(n-j)\}.\]
Then $h(n)$ is log-concave in $n$, for $n=1,\dots, k+1$.
\end{lem}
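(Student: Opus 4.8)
The plan is to verify the defining inequality for log-concavity of a sequence on $\{1,\dots,k+1\}$, namely
\[
h(n)^2 \ge h(n-1)\,h(n+1) \qquad \text{for every interior index } 2 \le n \le k.
\]
If either $h(n-1)$ or $h(n+1)$ vanishes this is trivial, so I would assume both are positive. I then fix maximizers realizing the two neighboring values: let $p \in \{1,\dots,n+1\}$ satisfy $h(n+1) = f(p)\,g(n+1-p)$ and let $q \in \{1,\dots,n-1\}$ satisfy $h(n-1) = f(q)\,g(n-1-q)$. Since both products are positive, each of $f(p), f(q), g(n+1-p), g(n-1-q)$ is positive. The entire argument then consists of manufacturing, out of the pair $(p,q)$, two \emph{admissible} decompositions of $n$ whose summand-products together dominate $h(n-1)h(n+1)$; since $h(n)$ is a maximum over all decompositions, $h(n)^2$ dominates that product.

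The engine is a case split on the relative size of $p$ and $q$. In the case $q < p$, I would use the decompositions $n = (p-1) + (n+1-p)$ and $n = (q+1) + (n-1-q)$, so that
\[
h(n)^2 \ge \big[f(p-1)g(n+1-p)\big]\big[f(q+1)g(n-1-q)\big] = \big[f(p-1)f(q+1)\big]\big[g(n+1-p)g(n-1-q)\big].
\]
Here the $g$-factors are identical to those of $h(n-1)h(n+1)$, and log-concavity of $f$ together with $q < p$ gives $f(p-1)f(q+1)\ge f(p)f(q)$ (because the ratio $f(m+1)/f(m)$ is non-increasing and $q\le p-1$), so the right-hand side is $\ge h(n+1)h(n-1)$. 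In the complementary case $q \ge p$, I would instead use the decompositions $n = p + (n-p)$ and $n = q + (n-q)$; now the $f$-factors are left intact, and log-concavity of $g$ with $n-p \ge n-q$ yields $g(n-p)g(n-q)\ge g(n+1-p)g(n-1-q)$, again producing $h(n)^2 \ge h(n+1)h(n-1)$.

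Conceptually this is just the assertion that the $(\max,\cdot)$-convolution of two log-concave sequences is log-concave; after taking logarithms it is the familiar fact that supremal convolution preserves concavity, the two decompositions above being the discrete analogue of averaging the two extremal configurations. Accordingly, the main obstacle is bookkeeping rather than depth, and it has two facets. First, one must choose in each case the single shift direction that orients the log-concavity inequality correctly — perturbing the ``$f$-side'' when $q<p$ and the ``$g$-side'' when $q\ge p$; the ``wrong'' shift reverses the inequality. Second, one must check that the shifted decompositions remain admissible, i.e.\ that the $f$-indices stay in $\{1,\dots,n\}$ and the $g$-indices in $\{0,\dots,k\}$, which is exactly what the interior range $2\le n\le k$ and the bounds $1\le p\le n+1$, $1\le q\le n-1$ guarantee. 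A final point needing a line of care is the possibility of internal zeros: the inequality $f(p-1)f(q+1)\ge f(p)f(q)$ uses that a non-negative log-concave sequence has interval support, so $f$ is positive throughout $\{q,\dots,p\}$ once $f(q),f(p)>0$, and the analogous remark handles $g$.
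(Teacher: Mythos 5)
Your argument is correct and complete, but there is nothing in the paper to compare it with line by line: the paper omits the proof of Lemma~\ref{lem:8} entirely, describing it only as a slight modification of Lemma~3.1 of \citet{Bartroff10c}. In that continuous-ammunition source, log-concavity of the sup-convolution is obtained by evaluating at the midpoint of the two maximizers; your case split on $p$ versus $q$ --- shifting the $f$-index by one when $q<p$ and the $g$-index by one when $q\ge p$ --- is exactly the right discrete surrogate for that midpoint (which need not be an integer), and the admissibility checks you flag ($p-1\ge 1$ because $p>q\ge 1$; $p\le n$ in the second case because $p\le q\le n-1$; all $g$-arguments staying in $\{0,\dots,k\}$) are precisely what the discrete setting adds. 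One caveat should be stated more carefully: interval support is \emph{not} a consequence of the bare three-term inequality $f(j)^2\ge f(j-1)f(j+1)$ for non-negative sequences (the sequence $1,0,0,1$ satisfies it), and with such sequences the lemma itself can fail --- take $f(1)=f(4)=1$, $f(2)=f(3)=0$ and $g(0)=g(1)=1$, $g(s)=0$ for $s\ge 2$; then $h(2)=h(4)=1$ but $h(3)=0$, so $h(3)^2<h(2)h(4)$. Hence the lemma must be read with the PF$_2$/TP$_2$ convention that log-concave sequences have no internal zeros, which is the convention consistent with the paper's total-positivity framework and with its applications, where the relevant sequences are strictly positive; under that reading your interval-support step, and with it the whole proof, is valid.
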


\begin{lem}\label{lem:9} Let $f^*(j)$, $j=1,2,\dots$, be a non-negative concave sequence and $g^*(s)$, $s = 0, 1, \dots, k$, be non-negative concave in $s$. Define
\[ h^*(n) = \max\limits_{j=1, \dots, n} \{ f^* (j) + g^* (n-j)\}.\]
Then $h^*(n)$ is concave in $n$ for $n=1,\dots, k+1$.
\end{lem}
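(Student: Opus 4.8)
The plan is to deduce Lemma~\ref{lem:9} directly from the already-established Lemma~\ref{lem:8} by exponentiating. Set $f(j)=\exp(f^*(j))$ for $j=1,2,\dots$ and $g(s)=\exp(g^*(s))$ for $s=0,1,\dots,k$. Since $\log f = f^*$ and $\log g = g^*$ are concave, $f$ and $g$ are strictly positive, hence non-negative, log-concave, so they satisfy the hypotheses of Lemma~\ref{lem:8}. Because $\exp$ is increasing it commutes with the maximum, giving
\[
\max_{j=1,\dots,n}\{f(j)\,g(n-j)\}=\max_{j=1,\dots,n}\exp\{f^*(j)+g^*(n-j)\}=\exp\{h^*(n)\}.
\]
Thus the function $h$ produced by Lemma~\ref{lem:8} is exactly $\exp(h^*)$, and Lemma~\ref{lem:8} asserts it is log-concave on $n=1,\dots,k+1$; that is, $\log h = h^*$ is concave on that range, which is the claim. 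On this route there is essentially no obstacle: one need only observe that $\exp$ interchanges with the $\max$ and that the index ranges for $n$ match those in Lemma~\ref{lem:8}, both immediate.

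Should a self-contained argument (not invoking Lemma~\ref{lem:8}) be preferred, I would instead establish midpoint concavity directly: it suffices to prove $h^*(n-1)+h^*(n+1)\le 2h^*(n)$ for $2\le n\le k$. Fix such an $n$ and let $j_1\in\{1,\dots,n-1\}$ attain $h^*(n-1)=f^*(j_1)+g^*(n-1-j_1)$ and $j_2\in\{1,\dots,n+1\}$ attain $h^*(n+1)=f^*(j_2)+g^*(n+1-j_2)$. The idea is to replace these two possibly unbalanced splits by two balanced splits of $n$ carrying the same total mass on the $f^*$-side: choose integers $j_A,j_B$ with $j_A+j_B=j_1+j_2$ and $|j_A-j_B|\le 1$, and feed the splits $(j_A,n-j_A)$ and $(j_B,n-j_B)$ into $h^*(n)$. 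A short check using $1\le j_1\le n-1$ and $1\le j_2\le n+1$ shows $1\le j_A,j_B\le n$, so both splits are admissible and the $g^*$-arguments $n-j_A,n-j_B$ lie in $\{0,\dots,n-1\}\subseteq\{0,\dots,k\}$.

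With these admissible splits we have $2h^*(n)\ge [f^*(j_A)+f^*(j_B)]+[g^*(n-j_A)+g^*(n-j_B)]$, and the goal reduces to the two inequalities $f^*(j_A)+f^*(j_B)\ge f^*(j_1)+f^*(j_2)$ and $g^*(n-j_A)+g^*(n-j_B)\ge g^*(n-1-j_1)+g^*(n+1-j_2)$, each being the elementary two-point statement that a concave sequence increases the sum of its values when its two equal-sum arguments are brought closer together. This is where the one genuine obstacle lies: a single balanced choice of $(j_A,j_B)$ must simultaneously be no more spread out than the $f^*$-pair $(j_1,j_2)$ \emph{and} than the $g^*$-pair $(n-1-j_1,\,n+1-j_2)$. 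The resolution is a parity observation: the three spreads $|j_A-j_B|$, $|j_1-j_2|$, and $|(n-1-j_1)-(n+1-j_2)|=|j_2-j_1-2|$ all have the parity of $j_1+j_2$, so the minimal admissible value ($0$ or $1$) chosen for $|j_A-j_B|$ is automatically at most each of the other two. Applying concavity of $f^*$ and of $g^*$ then yields both inequalities and hence the desired midpoint concavity.
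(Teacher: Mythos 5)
Your proposal is correct, but note that the paper does not actually spell out a proof of Lemma~\ref{lem:9}: it omits the proofs of Lemmas~\ref{lem:8} and \ref{lem:9}, describing them as slight modifications of Lemmas~3.1 and 5.1 of \citet{Bartroff10c} (continuous resource). Your first route is genuinely different in structure from that reference-based treatment: by setting $f=\exp(f^*)$, $g=\exp(g^*)$ and observing that $\exp$ commutes with the maximum, you make Lemma~\ref{lem:9} an immediate corollary of Lemma~\ref{lem:8}, rather than a parallel result needing its own (adapted) proof; this is clean and fully valid since $\exp(f^*),\exp(g^*)$ are strictly positive, so log-concavity of the resulting $h=\exp(h^*)$ is exactly concavity of $h^*$, and the index ranges ($j\ge 1$, $s\le k$, $n\le k+1$) match. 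Its only cost is that it inherits Lemma~\ref{lem:8}, whose proof the paper also leaves to the cited reference. Your second, self-contained argument is essentially the discrete analogue of the continuous sup-convolution concavity proof that the citation points to, and you correctly identify and resolve the one genuinely discrete subtlety: the balanced pair $(j_A,j_B)$ with $j_A+j_B=j_1+j_2$ and $|j_A-j_B|\le 1$ must be no more spread out than both the $f^*$-pair and the $g^*$-pair, which follows from your parity observation that all three gaps share the parity of $j_1+j_2$; the admissibility checks ($1\le j_A,j_B\le n$, all $g^*$-arguments in $\{0,\dots,k\}$) also go through. Either argument would serve as a complete proof of the lemma; the exponentiation route is the shorter given Lemma~\ref{lem:8}, while the direct route has the merit of making the paper self-contained on this point.
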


\begin{thm}\label{thm:BIF} [B] holds for the Invincible Fighter.
\end{thm}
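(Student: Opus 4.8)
The plan is to reduce [B], via Lemma~\ref{lem:6}, to a single concavity statement about $N^*(\cdot,t)$, and then to establish that concavity by an induction on $n$ that alternates between the smoothing lemma (Lemma~\ref{lem:7}) and the max-convolution lemma (Lemma~\ref{lem:9}). By Lemma~\ref{lem:6} it suffices to show that $N_n(j+1,t)-N_n(j,t)$ is non-decreasing in $n$ for fixed $j$ and $t$. Using the Invincible form \eqref{15}, $N_n(j,t)=a(j)+N^*(n-j,t)$, so
\[
N_n(j+1,t)-N_n(j,t)=[a(j+1)-a(j)]-[N^*(n-j,t)-N^*(n-j-1,t)].
\]
Since $a(j+1)-a(j)$ does not depend on $n$, the left side is non-decreasing in $n$ precisely when the increment $N^*(s,t)-N^*(s-1,t)$, with $s=n-j$, is non-increasing in $s$, i.e. when $N^*(s,t)$ is concave in $s$. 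By Lemma~\ref{lem:7} this follows from concavity of $N(s,t)$ in $s$. Thus the whole theorem reduces to showing that $N(s,t)$ is concave in $s$ for every fixed $t$.

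I would prove this concavity by induction on the range $s\in\{0,1,\dots,k\}$. For the base case I would compute $N(0,t)=0$, $N(1,t)=a(1)$, and from \eqref{15} and \eqref{4}, $N(2,t)=\max\{a(1)(2-e^{-t}),\,a(2)\}$; concavity at $s=1$ is then the inequality $N(2,t)\le 2a(1)$, which holds because $2-e^{-t}\le 2$ and, by concavity of the $a$ sequence, $a(2)\le 2a(1)$. For the inductive step, suppose $N(s,t)$ is concave for $s=0,1,\dots,k$. Then by Lemma~\ref{lem:7} so is $N^*(s,t)$ for $s=0,\dots,k$. Applying Lemma~\ref{lem:9} with $f^*(j)=a(j)$ and $g^*(s)=N^*(s,t)$, both non-negative and concave, and using that $N(n,t)=\max_{j=1,\dots,n}\{a(j)+N^*(n-j,t)\}$ by \eqref{5} and \eqref{15}, I would conclude that $N(n,t)$ is concave for $n=1,\dots,k+1$. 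Combined with the base case this yields concavity over $\{0,\dots,k+1\}$, closing the induction and hence, through the reduction above, proving [B].

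The main obstacle is not any single calculation but the bookkeeping of the ranges in the induction: the precise statement that concavity of $g^*$ only up to $k$ propagates to concavity of $h^*$ up to $k+1$ (exactly the feature flagged before Lemma~\ref{lem:8}) is what makes the induction advance by one step at a time and must be invoked carefully. A related subtlety is that $N(0,t)=0$ is defined separately while the max-convolution $h^*(n)$ in Lemma~\ref{lem:9} is only defined for $n\ge 1$; consequently the concavity at the boundary point $s=1$, which involves the value $N(0,t)$, cannot be supplied by Lemma~\ref{lem:9} and must be verified directly in the base case. Once the two engine lemmas---Lemma~\ref{lem:7} for the exponential smoothing in \eqref{4} and Lemma~\ref{lem:9} for the max-convolution---are in hand, the remaining steps are routine.
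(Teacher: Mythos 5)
Your proposal is correct and follows essentially the same route as the paper: reduce [B] to the condition of Lemma~\ref{lem:6}, and establish concavity of $N(\cdot,t)$ and $N^*(\cdot,t)$ in the ammunition variable by an induction that alternates Lemma~\ref{lem:7} (exponential smoothing) with Lemma~\ref{lem:9} (max-convolution), using \eqref{15}. Your explicit verification of the boundary concavity at $s=1$ (that $N(2,t)\le 2a(1)$, which involves $N(0,t)=0$ and is not supplied by Lemma~\ref{lem:9}) is a minor but welcome sharpening of the base case, which the paper treats more tersely.
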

\begin{proof} We shall use Lemma~\ref{lem:9} and induction on $k$.  Let $f^*(j) = a(j)$ for $j=1, 2, \dots $, which is a non-negative concave sequence, and let $g^*(n-j) = N^*(n-j,t)$, where $t$ is fixed.  Then the resulting $h^*(n)$ will, by \eqref{15}, be $N(n, t)$.  Since $N^*(0, t) = 0$ and $N^*(1, t) = a(1) (1-e^{-t})$, it follows that $g^* (s)$ is trivially non-negative concave for $s=0, 1$, i.e., for $k=1$.  The resulting $N(n, t)$ is therefore concave in $n$ for $n=1, 2$.  Now use Lemma~\ref{lem:7} to imply that $N^*(s, t)$ is concave for $s$ up to $2$, which again by Lemma~\ref{lem:9} implies that $N(s, t)$ is concave for $s$ up to $3$, etc.  The sequence $N(n, t)$ is therefore concave for all $n$ and fixed $t$, which, by Lemma~\ref{lem:7}, implies that the $N^*(n, t)$ sequence is concave.  We shall see that the condition of Lemma~\ref{lem:6}, and hence [B], hold.  Fix $j$ and let $n > j$.  Then
\[
N_n (j+1,t) - N_n (j,t) = a(j+1) - a(j) + N^* (n-j-1, t) - N^*(n-j, t),\]
which is non-decreasing in $n$ by the concavity of the $N^*(n, t)$ sequence.
\end{proof}
Next we show that for the Invincible Fighter, $K(n,t)$ takes on all values $n, n-1,\dots, 1$ as $t$ ranges from zero to infinity.  No similar result has been shown for the Bomber problem.

For $s=1, 2,\dots$ let
\beq\label{19}
D^*(s,t) = N^*(s, t)-N^*(s-1, t).
\eeq
\begin{lem}\label{lem:11}  For the Invincible Fighter, for fixed $n$ and $j=1,\dots, n-1$, the equation
\beq\label{20} N_n (j+1, t) = N_n (j, t)\eeq
has a unique solution, to be denoted $t(n, j)$.  For $t< t(n,j)$ the inequality $N_n(j+1,t) > N_n(j, t)$ holds, whereas for $t > t(n,j)$ one has $N_n(j+1,t) < N_n (j,t)$.
\end{lem}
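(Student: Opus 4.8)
The plan is to study, for each fixed $j\in\{1,\dots,n-1\}$, the single function $f(t)=N_n(j+1,t)-N_n(j,t)$ and to show it is continuous, strictly decreasing, positive at $t=0$, and negative for large $t$; the unique sign change then delivers $t(n,j)$ together with the asserted inequalities. First I would invoke \eqref{15}, which for the Invincible Fighter gives $N_n(i,t)=a(i)+N^*(n-i,t)$, together with the definition \eqref{19}, to rewrite
\[ f(t)=N_n(j+1,t)-N_n(j,t)=[a(j+1)-a(j)]-D^*(n-j,t). \]
Continuity of $f$ in $t$ is inherited from the continuity of $N^*$, which is an integral by \eqref{4}.

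Next I would establish strict monotonicity. Since Theorem~\ref{thm:AIF} shows that \eqref{16} holds for every $n$, the remark following Lemma~\ref{lem:4} applies and tells us that $D^*(n-j,t)=N^*(n-j,t)-N^*(n-j-1,t)$ is strictly increasing in $t$. Hence $f$ is strictly decreasing, which immediately yields uniqueness (at most one zero) and shows that the full sign pattern, namely $N_n(j+1,t)>N_n(j,t)$ for $t<t(n,j)$ and the reverse for $t>t(n,j)$, follows as soon as a zero is known to exist.

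It then remains to pin down the two endpoints. At $t=0$ we have $N^*(s,0)=0$ by \eqref{4}, so $D^*(n-j,0)=0$ and $f(0)=a(j+1)-a(j)>0$. For the behavior as $t\to\infty$ I would show that $L(s):=\lim_{t\to\infty}N(s,t)$ exists and equals $s\,a(1)$: existence follows because $N(s,\cdot)$ is non-decreasing in $t$ and bounded above by $s\,a(1)$ (each missile kills with probability at most $a(1)$, by concavity). Passing to the limit in \eqref{4} gives $\lim_{t\to\infty}N^*(r,t)=L(r)$, and then letting $t\to\infty$ in $N(n,t)=\max_{j}[a(j)+N^*(n-j,t)]$ yields the fixed-point recursion $L(n)=\max_{1\le j\le n}[a(j)+L(n-j)]$. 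An easy induction, using that $a(j)\le j\,a(1)$ with strict inequality for $j\ge 2$, gives $L(n)=n\,a(1)$. Consequently $D^*(n-j,t)\to a(1)$, so $\lim_{t\to\infty}f(t)=[a(j+1)-a(j)]-a(1)<0$, the strict inequality holding because strict concavity forces $a(j+1)-a(j)<a(1)$ for $j\ge 1$. Combining continuity, strict monotonicity, $f(0)>0$ and $\lim_{t\to\infty}f(t)<0$ produces the unique root $t(n,j)$ with the asserted inequalities.

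The main obstacle is the limit at infinity: one must justify that $L(s)$ exists, that the exponential averaging in \eqref{4} passes the limit through (an Abelian-type argument resting on monotone convergence of $N(s,\cdot)$), and that the resulting recursion forces $L(n)=n\,a(1)$. Everything else, namely the algebraic rewriting, the value at $t=0$, and the deduction of uniqueness and the sign pattern, is routine once strict monotonicity of $D^*$ is imported from the earlier results.
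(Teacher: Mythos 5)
Your proof is correct, and its skeleton matches the paper's: you rewrite $N_n(j+1,t)=N_n(j,t)$ as $D^*(n-j,t)=a(j+1)-a(j)$, import strict monotonicity of $D^*(\cdot,t)$ in $t$ from the remark after Lemma~\ref{lem:4} (justified by Theorem~\ref{thm:AIF}), use continuity, note $D^*(n-j,0)=0$ with $a(j+1)-a(j)>0$, and show $D^*(n-j,t)\to a(1)$ with $a(j+1)-a(j)<a(1)$ by strict concavity. The one substantive difference is how the limit at infinity is obtained. The paper proves $N^*(s,t)\to s\,a(1)$ by a probabilistic sandwich: a lower bound from the suboptimal strategy of spending one missile on each of the first $s$ enemies, and an upper bound from a clairvoyant fighter who knows there will be at least $s$ arrivals (for whom concavity makes one-missile-per-enemy optimal), both tending to $s\,a(1)$ as the probability $p(t)$ of at least $s$ arrivals tends to $1$. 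You instead argue analytically: $N(s,\cdot)$ is non-decreasing and bounded by $s\,a(1)$, so $L(s)=\lim_t N(s,t)$ exists; dominated convergence pushes the limit through the exponential averaging in \eqref{4}; and the limiting recursion $L(n)=\max_j[a(j)+L(n-j)]$ together with $a(j)<j\,a(1)$ for $j\ge 2$ forces $L(n)=n\,a(1)$. Your route is self-contained within the recursive structure of the model and avoids the coupling-style bounds, at the cost of two auxiliary facts you assert rather than prove: that $N(s,t)$ is non-decreasing in $t$ and that $N(s,t)\le s\,a(1)$. Both are true and follow from short inductions on $s$ (for monotonicity: if $N(r,\cdot)$ is non-decreasing then so is $N^*(r,t)=E\,N(r,t-X)$, and a maximum of non-decreasing functions is non-decreasing; for the bound: $N_s(j,t)=a(j)+N^*(s-j,t)\le j\,a(1)+(s-j)a(1)$), but you should include them, since the monotonicity in particular is exactly what makes your limit $L(s)$ exist. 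The paper's sandwich argument sidesteps both facts, which is what it buys in exchange for the slightly more ad hoc bounding.
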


\begin{proof} Equation \eqref{20} can be written as \beq\label{21} D^*(n-j,t) = a(j+1) - a(j).\eeq In the proof of [A] we
showed that $D^*(s,t)$ is strictly increasing in $t$.  Thus if we establish the existence of $t(n,j)$, the rest of the claim
follows.  We shall show that \begin{description} \item[(a)] For any $s$, $\lim\limits_{t\to 0} D^*(s, t) = 0$, and \item[(b)]
For any $s$, $\lim\limits_{t\to\infty} D^*(s, t) = a(1)$. \end{description} Claim (a) is trivially true, as the domain of the
integration defining $N^*(s, t)$ tends to $0$.  For (b), fix $s$ and let $p(t)$ denote the probability of at least $s$ enemy
planes in the time period $(0,t)$, i.e., $p(t)=e^{-t}\sum_{i\ge s}t^i/i!\To 1$ as $t\To\infty$. Then $N^*(s,t)$ is bounded
below by the expected number obtained by spending one unit of ammunition per each of the first $s$ enemies; neglecting the
number shot down given fewer than $s$ arrivals, this gives $N^*(s,t)\ge sa(1)p(t)\To sa(1)$ as $t\To\infty$. On the other hand,
if the Fighter somehow knew there would be at least $s$ arrivals, it follows from concavity of $a(j)$ that the optimal strategy 
in
state $(s,t)$ would be to spend one unit on each of the first $s$ enemies. Crudely bounding the number shot down given fewer than $s$
arrivals by $s$, we have $N^*(s,t)\le s(1-p(t))+sa(1)p(t)\To sa(1)$ as $t\To\infty$, showing that $N^*(s,t)\To sa(1)$, and (b)
now follows from \eqref{19}.  Now (a) and (b) guarantee the existence of a solution to \eqref{21} by continuity of $D^*(s, t)$
in $t$, and by concavity which implies $0< a(j+1) - a(j) < a(1)$. \end{proof} \begin{thm}\label{thm:K} For the Invincible
Fighter, $K(n,t)$ takes all the values $n, n-1, \dots, 1$ as $t$ ranges from $0$ to $\infty$. \end{thm}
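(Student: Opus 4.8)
The plan is to read $K(n,t)$ off explicitly from the threshold function $t(n,j)$ furnished by Lemma~\ref{lem:11}, after showing that these thresholds are strictly decreasing in $j$. First I would pin down the two endpoints. At $t=0$ we have $N_n(j,0)=a(j)$, which is strictly increasing in $j$, so the maximum in \eqref{5} is attained uniquely at $j=n$ and $K(n,0)=n$. At the other extreme, once $t$ exceeds every $t(n,j)$, $j=1,\dots,n-1$, Lemma~\ref{lem:11} gives $N_n(j+1,t)<N_n(j,t)$ for all such $j$, so the sequence $N_n(1,t),\dots,N_n(n,t)$ is strictly decreasing and $K(n,t)=1$.

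The core step is to prove $t(n,1)>t(n,2)>\cdots>t(n,n-1)$, with all thresholds finite and positive. Finiteness and positivity follow from the limits $\lim_{t\to 0}D^*(s,t)=0$ and $\lim_{t\to\infty}D^*(s,t)=a(1)$ established in the proof of Lemma~\ref{lem:11}, together with the strict bounds $0<a(j+1)-a(j)<a(1)$ coming from concavity. For the ordering I would combine two monotonicities of $D^*$: by the proof of Lemma~\ref{lem:11} the quantity $D^*(s,t)$ is strictly increasing in $t$, while by Theorem~\ref{thm:BIF} the sequence $N(n,t)$, and hence (by Lemma~\ref{lem:7}) the sequence $N^*(s,t)$, is concave in its integer argument, so $D^*(s,t)=N^*(s,t)-N^*(s-1,t)$ is non-increasing in $s$. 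Writing \eqref{21} as $D^*(n-j,t(n,j))=a(j+1)-a(j)$ and invoking strict concavity of $a$, I obtain
\[ D^*(n-j-1,t(n,j+1))=a(j+2)-a(j+1)<a(j+1)-a(j)=D^*(n-j,t(n,j))\le D^*(n-j-1,t(n,j)), \]
and since $D^*(n-j-1,\cdot)$ is strictly increasing in $t$ this forces $t(n,j+1)<t(n,j)$.

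Finally I would assemble the pieces. Adopting the conventions $t(n,0)=\infty$ and $t(n,n)=0$, fix $m\in\{1,\dots,n\}$ and take $t\in(t(n,m),t(n,m-1))$. Since the thresholds decrease in $j$, Lemma~\ref{lem:11} then yields $N_n(j+1,t)>N_n(j,t)$ precisely for $j<m$ and $N_n(j+1,t)<N_n(j,t)$ precisely for $j\ge m$; thus $N_n(\cdot,t)$ is strictly unimodal with its single peak at $j=m$, giving $K(n,t)=m$. Because the thresholds are strictly ordered and all lie in $(0,\infty)$, each of these intervals is non-empty, so $K(n,t)$ assumes every value $n,n-1,\dots,1$. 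I expect the threshold-ordering step to be the main obstacle, since it is the one place where both monotonicities of $D^*$ and the strict concavity of $a$ must be used at once, and some care is needed to keep the inequalities strict so that the intervals come out genuinely non-degenerate rather than merely weakly ordered.
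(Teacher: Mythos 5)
Your proposal is correct and follows essentially the same route as the paper: both reduce the theorem to the strict ordering $t(n,j+1)<t(n,j)$ of the thresholds from Lemma~\ref{lem:11}, proved via \eqref{21} together with the monotonicity of $D^*(s,t)$ in $s$ (concavity of $N^*$ in its integer argument) and in $t$, and the strict concavity of $\{a(j)\}$. The only difference is that the paper goes further and proves the interlacing $t(n,j+1)<t(n+1,j+1)<t(n,j)$, of which your chain of inequalities is in effect the composition.
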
 \begin{proof} In 
the
notation of Lemma~\ref{lem:11} it suffices to show that $t(n,j+1) < t(n,j)$ for $j=0,\dots, n-1$, as clearly then $K(n,t) =
j+1$ for $t(n,j+1) \le t < t(n,j)$, where we have let $t(n,n) =0$ and $t(n,0)=\infty$.  Actually we shall show the stronger
conclusion \beq\label{22} t(n,j+1) < t(n+1,j+1) < t(n,j)\; \; \mathrm{for\ } \; \;  j = 0,\dots, n-1,\eeq which shows that
$K(n+1, t)$ will change exactly once in every interval of constancy of $K(n,t)$.

The first inequality in \eqref{22} follows using \eqref{21} and the fact that $D^*(s,t)$ is strictly decreasing in $s$ and $t$, the former by the strict concavity of the $N^*(s, t)$ sequence for every fixed $t$, shown in the proof of Theorem~\ref{thm:BIF}.  The second inequality follows using \eqref{21} and the fact that the sequence $\{a(j)\}$ is strictly  concave.
\end{proof}

\section{Proof of [A] and Counterexample to [B] for the Frail Fighter} \label{sec:AFF}

The ideas of the proofs in the present section are similar to those of Section~\ref{sec:ABIF}.  We shall need the following lemma, which is a slightly changed version of the lemma in \citet{Samuel70}.
\begin{lem}\label{lem:13} Fix $n$ and assume that $f(i, t)$ are positive continuous functions of $t$ for $i=1,\dots, n$ such that $f(i, t)/f(i+1, t)$ are monotone non-decreasing in $t$ for $i=1\dots, n-1$.  Let
\[
K(n, t) = \min\{ j: f(j,t) = \max\limits_{i=1, \dots, n} f(i, t)\}.\]
Then $K(n, t)$ is non-increasing in $t$ and right continuous, i.e., [A] holds for $n$.
\end{lem}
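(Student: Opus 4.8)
The plan is to exploit the hypothesis on the ratios $f(i,t)/f(i+1,t)$ to obtain a single-crossing property for every pair of indices, and then deduce both monotonicity and right continuity of $K(n,t)$ from it. The first step is to observe that for any pair $i<j$ one has
\[
\frac{f(i,t)}{f(j,t)} = \prod_{l=i}^{j-1}\frac{f(l,t)}{f(l+1,t)},
\]
a product of positive non-decreasing factors, hence itself non-decreasing in $t$. Equivalently: once $f(i,t)\ge f(j,t)$ holds for a smaller index $i<j$ at some time $t$, it continues to hold for all larger times. This is the only consequence of the hypothesis I will use.

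For monotonicity I would argue by contradiction, in the spirit of the proof of Lemma~\ref{lem:3}. Suppose $t'>t$ but $K(n,t')=s>r=K(n,t)$. Since $s$ is the \emph{smallest} maximizer at $t'$ and $r<s$, the convention in the definition of $K$ forces the strict inequality $f(r,t')<f(s,t')$. On the other hand $r$ is a maximizer at $t$, so $f(r,t)\ge f(s,t)$; applying the ratio monotonicity from the first step to the pair $r<s$ gives $f(r,t')\ge f(s,t')$, a contradiction. Hence $K(n,t')\le K(n,t)$, which is exactly [A].

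For right continuity I would use that $K(n,\cdot)$ is integer-valued and non-increasing, so at any $t_0$ the right limit $s:=K(n,t_0^+)$ exists, satisfies $s\le r:=K(n,t_0)$, and in fact $K(n,t)=s$ on some interval $(t_0,t_0+\delta)$ (a monotone integer-valued function is locally constant from the right). On that interval $f(s,t)\ge f(r,t)$, and letting $t\downarrow t_0$ and invoking continuity of the $f(i,\cdot)$ yields $f(s,t_0)\ge f(r,t_0)$. But if $s<r$ this contradicts the fact that $r$ is the smallest maximizer at $t_0$, which gives $f(s,t_0)<f(r,t_0)$; therefore $s=r$ and right continuity follows.

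The individual steps are routine; the point requiring the most care is the bookkeeping around the ``smallest maximizer'' convention, which is precisely what converts the weak inequalities coming from optimality into the strict inequalities needed to close both contradictions. I expect no genuine obstacle beyond keeping that strictness straight and recording the elementary fact that a monotone integer-valued function is locally constant from the right.
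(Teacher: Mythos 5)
Your proof is correct, and it is the argument the paper intends: the paper itself omits the proof of this lemma, stating it is verbatim the one in \citet{Samuel70}, which rests on exactly the two observations you make --- telescoping the hypothesis into monotonicity of $f(i,t)/f(j,t)$ for every pair $i<j$ (a single-crossing property), and then using the smallest-maximizer convention to turn optimality into the strict inequalities that close the contradictions. Your right-continuity step, via local constancy from the right of a monotone integer-valued function together with continuity of the $f(i,\cdot)$, is likewise sound and supplies the detail the paper leaves to the reference.
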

We omit the proof, as it is verbatim the same as that in \citet{Samuel70}.  (The statement there required that $f(i, t)/f(i+1, t)$ be strictly increasing, but this was not used in the proof.)
\begin{thm} \  [A] holds for the Frail Fighter.
\end{thm}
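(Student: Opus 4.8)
The Frail Fighter has $u = 0$, so $c(j) = a(j)$ by~\eqref{2}, and~\eqref{3} becomes $N_n(j,t) = a(j)(1 + N^*(n-j,t))$. The natural route to proving [A] is Lemma~\ref{lem:13}: if I set $f(j,t) = N_n(j,t)$ for $j = 1,\dots,n$, then [A] for a given $n$ follows once I verify that these $f(j,t)$ are positive, continuous in $t$, and that the ratios $N_n(j,t)/N_n(j+1,t)$ are monotone non-decreasing in $t$ for $j = 1,\dots,n-1$. Positivity and continuity are clear from the integral representation in~\eqref{4}, so the whole burden falls on the monotonicity of the ratios.

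**The key reduction.**

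Using the Frail form $N_n(j,t) = a(j)(1 + N^*(n-j,t))$, the ratio factors as
\[
\frac{N_n(j,t)}{N_n(j+1,t)} = \frac{a(j)}{a(j+1)} \cdot \frac{1 + N^*(n-j,t)}{1 + N^*(n-j-1,t)}.
\]
The first factor is a positive constant in $t$, so it suffices to show that
\[
\frac{1 + N^*(n-j,t)}{1 + N^*(n-j-1,t)} \quad\text{is non-decreasing in } t
\]
for the relevant indices. Writing $s = n-j$, this is the statement that $(1 + N^*(s,t))/(1 + N^*(s-1,t))$ is non-decreasing in $t$ for each $s$, which I expect to establish by induction on $s$, paralleling the inductive scheme used for the Invincible Fighter in Theorem~\ref{thm:AIF}. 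The base cases should be checked directly, and I anticipate—just as in the parenthetical remark after Theorem~\ref{thm:AIF}—that the induction needs to reach back two steps, so $s = 1$ and $s = 2$ will be verified by hand before running the inductive step.

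**The main obstacle and how I would attack it.**

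The hard part is the inductive step: propagating the ratio-monotonicity from $N$ to $N^*$ and back. Differentiating $N^*$ via~\eqref{4} gives $\frac{d}{dt}N^*(s,t) = N(s,t) - N^*(s,t)$, and the analogous smoothing inequality from the proof of Lemma~\ref{lem:4}—namely $N^*(s,t) \le N(s,t)(1 - e^{-t})$-style bounds—should control the sign of the derivative of the ratio after clearing the positive denominator. Unlike the Invincible case, where $N_n(j,t)$ depends on $t$ only through an \emph{additive} term $N^*(n-j,t)$, here the dependence enters \emph{multiplicatively}, so I cannot simply reuse the additive-difference argument of Lemma~\ref{lem:4}; the quantity that must be shown monotone is a ratio rather than a difference, and the log-concavity / concavity machinery of Lemmas~\ref{lem:8} and~\ref{lem:9} becomes the natural tool. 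Concretely, I expect the cleanest formulation to be that the sequence $1 + N^*(s,t)$ (equivalently $N(s,t)$, up to the additive constant $a(1)$ factored out) is \emph{log-concave} in $s$ for each fixed $t$, with the log-concavity transferred through the integral operator of~\eqref{4} exactly as Lemma~\ref{lem:7} transfers concavity. Once log-concavity in $s$ is in hand, the ratio monotonicity in $t$ should follow from combining it with the derivative identity above. The delicate point, and where I would spend the most care, is confirming that the Frail recursion preserves log-concavity under the $\max$ in~\eqref{5}: this is precisely the content of Lemma~\ref{lem:8} (the log-concave analogue), and I would verify that its hypotheses—$f(j) = a(j)$ being log-concave and the multiplicative structure $N_n(j,t) = a(j)(1+N^*(n-j,t))$—are met, then bootstrap $s = 1 \to 2 \to 3 \to \cdots$ as in Theorem~\ref{thm:BIF}.
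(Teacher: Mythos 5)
Your opening moves coincide with the paper's: invoking Lemma~\ref{lem:13} with $f(i,t)=N_n(i,t)$, using the Frail form \eqref{23} to factor out the constant $a(j)/a(j+1)$, and reducing [A] to showing that $[1+N^*(s,t)]/[1+N^*(s-1,t)]$ is non-decreasing in $t$, which is exactly \eqref{24}. The gap is in the step you designate as the heart of the proof. You propose to establish \eqref{24} by proving that $1+N^*(s,t)$ is log-concave in $s$ for each fixed $t$ (bootstrapped through \eqref{4} and the max in \eqref{5} via Lemma~\ref{lem:8}); but this log-concavity is false in general for the Frail Fighter. Indeed, if $g(s)=1+N^*(s,t)$ were log-concave in $s$, then for any $j<k=K(n,t)$ one would have, using \eqref{23} and $N_n(j,t)<N_n(k,t)$,
\[
N_{n+1}(j,t)=N_n(j,t)\,\frac{g(n+1-j)}{g(n-j)}\le N_n(j,t)\,\frac{g(n+1-k)}{g(n-k)}< N_n(k,t)\,\frac{g(n+1-k)}{g(n-k)}=N_{n+1}(k,t),
\]
so that $K(n+1,t)\ge K(n,t)$, i.e., [B] would hold. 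But Assertion~\ref{counterexample} shows [B] fails (already for $a(j)=1-2^{-j}$, with $K(4,t)=3>2=K(5,t)$ for $t>2.694\ldots$), so the log-concavity you plan to prove already fails there for $s\le 4$; Lemma~\ref{lem:8} is the [B]-machinery, and it cannot be made to work in the frail case. A second, independent problem: even where log-concavity in $s$ does hold, it is a property at fixed $t$, whereas \eqref{24} is a monotonicity-in-$t$ (TP$_2$) statement; your final sentence that ratio monotonicity in $t$ ``should follow from combining it with the derivative identity'' is not an argument, and no such implication is available. (Also, $1+N^*(s,t)$ is not $N(s,t)$ up to an additive constant: $N^*(s,t)=EN(s,t-X)$.)

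What the paper actually does at this point involves no structure in $s$ at all. Differentiating the ratio gives \eqref{25}, whose right-hand side is non-negative provided $N(k,t)/N(k-1,t)$ is non-decreasing in $t$, by writing $N^*(k,t)=EN(k,t-X)$ and taking expectations in \eqref{26}. The resulting claim \eqref{27} is then proved by induction on $n$: by the induction hypothesis [A] holds for $m$ and $m-1$, so $K(m,t)$ and $K(m-1,t)$ are constant on left-closed, right-open intervals; on an interval where $K(m,t)=j$ and $K(m-1,t)=r$, property [C] gives $1\le j\le r+1$, and \eqref{28}--\eqref{29} express $N(m,t)/N(m-1,t)$ as $a(r+1)/a(r)$ times a telescoping product of ratios $N_m(i,t)/N_m(i+1,t)$, each non-decreasing in $t$ by the induction hypothesis. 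This combination --- piecewise constancy of $K$, the use of [C], and the telescoping product --- is the actual content of the inductive step, and it is the mechanism missing from your sketch; your proposed ``induction on $s$ paralleling the Invincible case'' has no substitute for it, since, as you yourself note, the additive argument of Lemma~\ref{lem:4} does not transfer to the multiplicative frail structure.
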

\begin{proof} We would like to show, for $f(i, t) = N_n(i, t)$ of Lemma~\ref{lem:13}, that $N_n (i, t)/N_n(i+1, t)$ is non-decreasing in $t$ for $i=1, \dots, n-1$.  This is immediate for $i=n-1$ as clearly $N_n (n-1, t) / a(n)$ is non-decreasing in $t$.  Thus [A] holds for $n=2$ (and all $u$).  For the Frail Fighter, (\ref{3}) simplifies to
\beq \label{23} N_n (i, t) = a(i) [1+N^* (n-i, t)], \quad i = 1,\dots, n.\eeq
Thus for the conditions of Lemma~\ref{lem:13} to hold we must show that
\beq\label{24}
[1+N^* (n-i, t)] / [1+N^* (n-i-1, t)] \; \; \mathrm{for \ } \; \; i = 1, \dots, n-2\eeq
is non-decreasing in $t$.
Since $i\ge 1$ and \eqref{24} is trivial for $i=n-1$, \eqref{24} would imply that [A] holds for $n$, where $n\ge 3$.  We shall show that $N^* (k,t)$ is \textit{totally positive of order 2} (TP$_2$, see \citet{Karlin68}), i.e., that the ratio $N^*(k, t)/N^* (k-1, t)$ is non-decreasing in $t$ for $k=2, \dots, n-1$. Since $N^*(k, t)$ is non-decreasing in both variables, it follows similarly to the proof of Lemma 3.5 in \citet{Bartroff10c} that also $1+N^*(k, t)$ is TP$_2$, i.e. that \eqref{24} holds.  After some cancelation it follows that
\beq\label{25} N^*(k-1, t)^2 \frac{d}{dt} \left[\frac{N^* (k, t)}{N^*(k-1, t)}\right] = N(k, t) N^* (k-1, t) - N(k-1, t) N^*(k, t)
\eeq
and we would like to show that the value in the right hand side is non-negative.
Suppose we show that $N(k, t)/N(k-1, t)$ is non-decreasing in $t$.  Then we would have that
\beq\label{26} N(k, t)N(k-1, t-X) \ge N(k, t-X) N(k-1, t)\eeq
where $X$ is exponentially distributed, and where we define $N(k, t) = 0$ for $t < 0$.  Taking expectations over $X$ in \eqref{26} yields that the right hand side in \eqref{25} is non-negative.  Thus it suffices to show that
\beq\label{27} \frac{N(k, t)}{N(k-1, t)}\quad\mbox{is non-decreasing\ in $t$ for $k=2, \ldots, n-1$}\eeq
for [A] to hold for $n$.  For $k=2$ the claim is trivial, thus [A] holds for $n=3$.  Now suppose \eqref{27} holds for $n=m$ 
where $m\ge 3$.  We shall show that it holds for $n=m+1$. Note that by \eqref{27} the only additional requirement is to 
show 
that $N(m,t)/N(m-1, t)$ is non-decreasing in $t$.   By the induction hypothesis we know that [A] holds for $m$ and for $m-1$, and that $K(m,t)$ and $K(m-1, t)$ are constant on left closed, right open intervals.  Suppose that in some interval, $K(m, t) = j$ and $K(m-1, t) = r$.  Then, by [C], $1\le j \le r+1 \le m$ and
\beq\label{28} \frac{N(m, t)}{N(m-1, t)} = \frac{N_m (j, t)}{N_{m-1} (r, t)} = \frac{N_m (j, t)}{N_m (r+1, t)} \cdot \frac{a(r+1)}{a(r)}\eeq
which is constant in $t$, hence non-decreasing, when $j= r+1$.

For $j \le r$ write
\beq\label{29} \frac{N_m (j,t)}{N_m (r+1, t)} = \prod\limits^r_{i=j} \frac{N_m (i, t)}{N_m (i+1, t)}.\eeq
Now \eqref{27} holds for $n=m$ by the induction hypothesis and it follows that each term in the product \eqref{29} is non-decreasing in $t$, and hence the left hand side of \eqref{28} is non-decreasing in $t$, implying that  [A] holds for $n=m+1$.
\end{proof}

 \noindent {\bf Remark.} Unlike Theorem~\ref{thm:K}, a similar statement fails to hold for the Frail Fighter.  Even in the simple case $a(j) = 1-q^{j}$  and $n=2$ it is easily seen that spending both missiles is preferable to spending one for any $t$ whenever $1/2 < q \le 1$.  More generally, spending $n$ is preferable to spending $n-1$ for any $t$ whenever $q> (1/2)^{1/{(n-1)}}$.

\begin{asser}\label{counterexample} \conjB\; fails for the Frail Fighter.  
\end{asser}
\begin{proof} Since we have shown that [A] holds, the optimal $K(n,t)$ can be found by comparing $N_n(j+1,t)$ and $N_n(j,t)$ 
for 
$j=n-1,\ldots,1$. For the Frail Fighter with $a(j)=1-q^j$ and $q=1/2$, and letting $I(\cdot)$ denote an indicator function, one finds that for $0\le t<\infty$,
\begin{align*}
K(2,t)&=2, \\
K(3,t)&=3I(t<\log(3/2))+2I(t\ge\log(3/2)),\\
K(4,t)&=4I(t<\log(7/6))+3I(t\ge\log(7/6)), 
\end{align*}
and finally,
\begin{multline*}
K(5,t)=5I(t<\log(15/14))+4I(\log(15/14)\le t<\log(3/2))\\
+3I(\log(3/2)\le t<2.694\ldots)+2I(t\ge 2.694\ldots), 
\end{multline*}
where $2.694\ldots$ is the numerical solution to 
\begin{equation}
\frac{3}{4}\left[\frac{17}{8}-e^{-t}\left(\frac{5}{4}+\frac{3}{8}\left(t-\log(3/2)\right)\right)\right]=\frac{7}{8}\left[1+\frac{3}{4}(1-e^{-t})\right].
\end{equation}
Thus for $t> 2.694\ldots$ we have $K(4,t)=3$ while $K(5,t)=2$, violating [B]. Note that, by continuity in $u$, it is clear that a similar result (with slightly different cut-off point) will hold for $u>0$ sufficiently small. 
\end{proof}

\section*{Acknowledgements}
We would like to thank Larry Goldstein and Yosef Rinott for many helpful discussions on the Fighter and Bomber problems. Bartroff's work was supported in part by grant DMS-0907241 from the National Science Foundation and a Faculty-in-Residence grant from the Albert and Elaine Borchard Foundation.


\def\cprime{$'$}

\end{document}